\documentclass[11pt,a4paper]{article}

\usepackage[T1]{fontenc}
\usepackage[utf8]{inputenc}
\usepackage{lmodern}
\usepackage{amsmath,amssymb,amsthm,amsfonts}
\usepackage{mathtools}
\usepackage{geometry}
\usepackage[hidelinks]{hyperref}
\usepackage{enumitem}
\newtheorem{theorem}{Theorem}[section]
\newtheorem{lemma}[theorem]{Lemma}
\newtheorem{proposition}[theorem]{Proposition}
\newtheorem{corollary}[theorem]{Corollary}

\theoremstyle{definition}

\newcommand{\Q}{\mathbb{Q}}

\newcommand{\C}{\mathbb{C}}

\newcommand{\CP}{\mathbb{CP}}
\newcommand{\HP}{\mathbb{HP}}
\newcommand{\Ahat}{\widehat{A}}
\newcommand{\Spin}{\mathrm{Spin}}
\newcommand{\Ric}{\mathrm{Ric}}
\newcommand{\Scal}{\mathrm{Scal}}
\newcommand{\eps}{\varepsilon}

\title{The Kernel of the $\Ahat$-Genus in Rational Spin Bordism is Generated by Ricci-Positive Manifolds}
\author{Gerald H\"ohn\\
\small Kansas State University, USA\\
\small \texttt{gerald@monstrous-moonshine.de}
\and
Philipp H\"ohn\\
\small Universit\"at Bonn, Germany\\
\small \texttt{philipp.hoehn@uni-bonn.de}}
\date{\today}

\begin{document}
\maketitle

\begin{abstract}
We prove that, in every degree, the rational Spin bordism classes represented by manifolds admitting metrics with positive Ricci curvature span exactly the kernel of the $\Ahat$-genus.
More precisely, for
\[
R=\Omega_*^{\Spin}\otimes\Q,\qquad
J=\ker(\Ahat:R\longrightarrow\Q[u]),
\]
the $\Q$-span of bordism classes of Ricci-positive Spin manifolds equals $J$ in each degree.
This answers, 
in the differentiable rational Spin category, 
a question about rational bordism obstructions to positive Ricci curvature which was raised in the context of complex elliptic genera.

The proof uses smooth complete intersections of an odd number $\ell$ of quadrics
\[
Y_{m,\ell}\subset \CP^{2m+\ell},
\qquad
\ell=1,\, 3,\, \ldots,\, 2m-1.
\]
These manifolds have real dimension $4m$, are Spin and Fano, and therefore admit metrics with positive Ricci curvature. A first-order thickening of the $\Ahat$-genus induces $m-1$ linear functionals on $(J/J^2)_{4m}$. Their values on the classes $[Y_{m,\ell}]$ are governed by polynomials $P_{m,q}(\ell)$ of strictly increasing degrees $q+1=1$, $2$, $\ldots$, $m-1$. 
This gives full rank by a polynomial-interpolation argument.
\end{abstract}


\section{Introduction}

In a footnote on page~57 of the first author's 1991 diploma thesis, written in the context of complex elliptic genera of level $N$, the following question was raised \cite[p.~57, fn.~4]{HoehnDiplom}:
\begin{quote}\small
It is known that the $\widetilde A_N$-genus of a complex \(N\)-manifold vanishes if it admits a
K\"ahler metric with positive Ricci curvature; cf. \cite[Theorem~11.15]{LawsonMichelsohn}. Are there,
besides \(\widetilde A_N\), further rational cobordism-theoretic obstructions? In the case of
oriented Spin manifolds, the \(\widehat A\)-genus is presumably the only obstruction of this kind
to metrics with positive Ricci curvature. Indeed, by Yau's proof of the Calabi conjecture,
complete intersections with \(c_1>0\) admit metrics with positive Ricci curvature; cf.~\cite{Yau}. These complete intersections probably generate the kernel of \(\widehat A\) in
\(\Omega_*^{\mathrm{Spin}}\otimes \mathbb Q\).
\end{quote}
Here $\widetilde A_N$ denotes the cusp genus occurring in the theory of complex elliptic genera of level~$N$. 
It is the genus corresponding to the combined twisted Todd genera ${\rm \chi}(X,K^{k/N})$, $k=1$, $\ldots$, $N-1$.
Specializing this question to $N=2$ leads to the Spin situation considered below. 
In the complex-bordism formulation, the odd complex-dimensional part is immediate:
the spaces $\CP^{2k+1}$ satisfy the level-$2$ condition $2|c_1$,
carry Fubini--Study metrics with positive Ricci curvature, and have nonzero Milnor numbers.
The nontrivial part is therefore the even complex-dimensional, equivalently real $4m$-dimensional, Spin part.
The present paper does not use complex structures except to supply explicit Ricci-positive representatives.
The theorem itself is formulated purely in differentiable Spin bordism and proves this remaining assertion in the differentiable rational Spin bordism category.

\smallskip
This is also related to the loop-space interpretation of elliptic genera and what is often called the H\"ohn--Stolz conjecture:
a string manifold with positive Ricci curvature should have vanishing Witten genus; see \cite{StolzWitten}.
The result proved here is an easier Spin-bordism analogue in a converse direction.
\medskip

From the broader viewpoint of curvature versus topology, positive
Ricci curvature occupies an intermediate position between positive
sectional and positive scalar curvature. Positive scalar curvature has
a well-developed surgery and index-theoretic existence theory; in the
simply connected Spin case, Stolz's theorem~\cite{Stolz} identifies the relevant
index obstruction. For positive Ricci curvature, one still has the
Dirac obstruction coming from positive scalar curvature, and global
restrictions such as the Bonnet--Myers finiteness of the fundamental
group, but no comparable surgery classification is known. Thus it is
natural to ask what remains after passing to a coarse invariant such
as rational Spin bordism. The theorem below answers this question:
at the level of rational Spin bordism, the Lichnerowicz obstruction is
the only obstruction. Equivalently, every rational Spin genus which
vanishes on all positive-dimensional Ricci-positive Spin manifolds factors through the
\(\widehat A\)-genus.

\smallskip
The proof is in the spirit of Hirzebruch's characteristic-power-series
method: a square-zero deformation of the \(\widehat A\)-series produces
the coefficient functionals needed to detect the indecomposable part of
\(\ker \widehat A\).

\bigskip
All manifolds considered are smooth and closed. A genus will mean a homogeneous unital $\Q$-algebra homomorphism from a rational bordism ring to a graded $\Q$-algebra~\cite{Hirzebruch}. The rational Spin bordism ring is a polynomial algebra
\[
R:=\Omega_*^{\Spin}\otimes\Q
\cong
\Q[x_4,x_8,x_{12},\ldots],
\qquad \deg(x_{4k})=4k,
\]
with one algebraically independent generator in each positive degree divisible by $4$ \cite{ABP}. The $\Ahat$-genus is the genus
\[
\Ahat:R\longrightarrow \Q[u],
\qquad \deg(u)=4,
\]
with characteristic power series
\[
Q_{\Ahat}(\sqrt{u}\,x)=\frac{\sqrt{u}\,x/2}{\sinh(\sqrt{u}\,x/2)}.
\]
Set
\[
J:=\ker(\Ahat)\subset R.
\]

For $n>0$, let
\[
I_n^{\Spin,\Ric>0}\subset \Omega_n^{\Spin}\otimes\Q
\]
be the $\Q$-subspace spanned by bordism classes of Spin $n$-manifolds admitting metrics with positive Ricci curvature. We put $I_0^{\Spin,\Ric>0}=0$ and set
\[
I^{\Spin,\Ric>0}:=\bigoplus_{n\ge 0}I_n^{\Spin,\Ric>0}\subset R.
\]
Products of Ricci-positive manifolds are Ricci-positive with the product metric, so $I^{\Spin,\Ric>0}$ is a graded nonunital $\Q$-subalgebra of $R$. The Lichnerowicz theorem gives
\[
I^{\Spin,\Ric>0}\subseteq J,
\]
since positive Ricci curvature implies positive scalar curvature.

The main theorem is a kind of converse in rational Spin bordism.

\begin{theorem}\label{thm:main}
For every $n\ge 0$ one has
\[
I_n^{\Spin,\Ric>0}=J_n.
\]
Consequently, $I^{\Spin,\Ric>0}=J$ as graded $\Q$-vector spaces and nonunital graded $\Q$-algebras. In particular, $I^{\Spin,\Ric>0}$ is an ideal of $R$.
\end{theorem}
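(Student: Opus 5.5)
The plan is to reduce the statement to degrees $4m$ and then to the indecomposable quotient $(J/J^2)_{4m}$.

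\textbf{Trivial degrees.} In degrees $n\not\equiv 0 \pmod 4$ and $n=0$, both sides vanish: $R_n=0$ for such $n>0$, and $I_0^{\Spin,\Ric>0}=0=J_0$.

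\textbf{Structure of $J$.} Since $\Ahat$ is surjective onto $\Q[u]$, we have $R/J\cong\Q[u]$, so $\dim J_{4m}=p(m)-1$, where $p$ is the partition function. I will choose polynomial generators of $R$ so that $x_4,x_8,\dots$ satisfy $\Ahat(x_4)=u$ and $\Ahat(x_{4k})=0$ for $k\ge2$. For example, one can take $x_4=[K3]$, with $\Ahat(K3)=2$, suitably rescaled, and adjust the higher generators by subtracting multiples of $x_4^k$. Then $J$ is the ideal generated by $x_8,x_{12},\dots$ and by the elements $x_4^{a}x_{4k}$. A more intrinsic description: $J$ is generated by $x_{4k}$ for $k\ge 2$ together with nothing in degree $4$, since $J_4=0$ because $\dim R_4=1$ and $\Ahat\neq0$ there.

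\textbf{Reduction to indecomposables.} $J/(J\cdot R_{+})$ is spanned by the images of $x_{4k}$, $k\ge2$. A more convenient form is $(J/J^2)_{4m}$, which is the target the abstract's functionals are designed for. The reduction runs by induction on $m$, assuming $I_{4k}=J_{4k}$ for all $k<m$. The key point is that $I$ is closed under products, but not a priori an ideal; a product of Ricci-positive manifolds is Ricci-positive. We need all of $J_{4m}$, which contains elements like $x_4^{a}\cdot j$ with $j\in J$ where $x_4$ itself is not Ricci-positive. I handle this by noting that $J_{4m}$ is spanned by $J^2_{4m}$ together with lifts of a basis of $(J/J^2)_{4m}$, where $J^2$ is the ideal square. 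One must check the dimension count: $R=\Q[u']\otimes$-like decomposition. Concretely, write $R=\Q[y]\oplus J$ with $y$ a Ricci-positive-free lift, so $J\cong\bigoplus$ and $J/J^2$ is a free $\Q[y]$-module-like object. Cleaner: $J$ is a free $R/J$-module modulo $J^2$, and $(J/J^2)_{4m}$ has dimension equal to $\#\{k\ge2\}$ summed against powers of $u$, namely $m-1$. This matches the $m-1$ functionals.

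So it suffices to show: (a) $J^2_{4m}\subseteq I_{4m}$, which follows from the induction hypothesis since $I$ is closed under products; and (b) the images of Ricci-positive classes span $(J/J^2)_{4m}$, a space of dimension $m-1$. The main obstacle is showing that $\dim (J/J^2)_{4m}=m-1$ and that $J_{4m}=J^2_{4m}+\mathrm{span}(\text{lifts})$. This is a graded Nakayama argument once the dimension is computed via $J/J^2\cong \Omega_{(R/J)/\Q}$-type considerations, or directly from the generators $x_4^a x_{4k}$ modulo $J^2$.

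\textbf{Spanning $(J/J^2)_{4m}$.} For (b), I would construct the $m-1$ functionals as follows. Deform the characteristic series $Q(x)=\frac{x/2}{\sinh(x/2)}(1+\eps\, x^{2q+2}\cdots)$ with $\eps^2=0$. The resulting genus $\varphi_\eps=\Ahat+\eps\,\delta$ gives a derivation-like map $\delta$ that is additive on $J$ and kills $J^2$; here $\delta(ab)=\delta(a)\Ahat(b)+\Ahat(a)\delta(b)$. This yields functionals on $(J/J^2)_{4m}$ after taking the $u$-coefficient in top degree, one for each $q=0,\dots,m-2$. I would check their independence on a basis such as $x_4^{m-k}x_{4k}$. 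I would then evaluate them on the Fano Spin complete intersections $Y_{m,\ell}$ of $\ell$ quadrics. These are Spin because $c_1=(2m+\ell+1-2\ell)h$ is even for odd $\ell$, and Ricci-positive by Yau since $c_1>0$. Moreover $\Ahat=0$, so they lie in $J$. The total Chern class $(1+h)^{2m+\ell+1}(1+2h)^{-\ell}$ makes each functional a residue computation producing a polynomial $P_{m,q}(\ell)$ of exact degree $q+1$. The hardest step will be verifying this exact degree, i.e.\ that the leading coefficient is nonzero. Granting it, the $(m-1)\times(m-1)$ matrix indexed by $q$ and the $m-1$ odd values of $\ell$ is, up to triangular change of basis, a Vandermonde-type matrix at distinct nodes, hence invertible. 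This gives (b) and completes the induction. The final assertions follow because $J$ is an ideal.
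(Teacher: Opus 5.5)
Your overall architecture is the paper's: induction on degree, with $(J^2)_{4m}\subseteq I_{4m}$ from closure under products; reduction to spanning the $(m-1)$-dimensional space $(J/J^2)_{4m}$; a square-zero deformation of the $\Ahat$-series; and evaluation on the Spin Fano intersections $Y_{m,\ell}$. The reduction is fine. The identity $J_{4m}=(J^2)_{4m}+\mathrm{span}(\text{lifts})$ is just the definition of the quotient, so no Nakayama argument is needed. Also, $J/J^2$ is the conormal module rather than $\Omega_{(R/J)/\Q}$, though your direct count from generators is right.

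The genuine gap is the decisive rank step. You grant the exact-degree statement, and the interpolation you then describe does not work as set up.

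First, the degrees do not match your deformations. Deform by $\eps x^{2j}$ and use $Q_{\Ahat}(g)^{2m+\ell+1}/Q_{\Ahat}(2g)^{\ell}=Q_{\Ahat}(2s)^{2m+1}\cosh(s)^{\ell}$ with $s=g/2$ (and $u=1$). The resulting functional on $Y_{m,\ell}$ is
\[
2^{\ell}\bigl(2m+1+\ell(1-4^{j})\bigr)\,4^{j-m}\,[s^{2m-2j}]\,Q_{\Ahat}(2s)^{2m+1}\cosh(s)^{\ell},
\]
which has degree $m-j+1$ in $\ell$. With your exponents $2j=2q+2$, $q=0,\dots,m-2$, the degrees are $m,m-1,\dots,2$, not $q+1$. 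The $x^2$-direction gives degree $m$, and a degree-$m$ polynomial can vanish at all $m$ admissible nodes, so interpolation alone proves nothing. The paper uses the directions $x^4,\dots,x^{2m}$ (the $v^{r+1}$-coefficients of $\rho(x)=vx^4/(1-vx^2)$), which give degrees exactly $1,\dots,m-1$. Your span of functionals does in fact agree with the paper's on $J_{4m}$: the $u$-derivative of the family $Q_{\Ahat}(\sqrt u\,x)$ kills $J$ and has nonzero coefficient $-B_{2k}/(2(2k)!)$ at every $x^{2k}$. But that is an extra argument you do not give.

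Second, the node count is wrong. There are $m$ odd values $\ell=1,3,\dots,2m-1$, not $m-1$, and all of them are needed. Polynomials of degrees $1,\dots,m-1$ are not a triangular transform of $1,\ell,\dots,\ell^{m-2}$. So an $(m-1)\times(m-1)$ submatrix at $m-1$ nodes is not of Vandermonde type. It is singular exactly when $\prod_i(\ell-\ell_i)$ lies in the span of the polynomials, and nothing excludes that. The working argument is this: $\sum_q b_qP_{m,q}$ has degree $\le m-1$ and vanishes at $m$ distinct points, hence is zero. Strictly increasing degrees then force all $b_q=0$, so the $m\times(m-1)$ matrix has rank $m-1$.

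Third, the degree you grant follows quickly once the $\ell$-dependence is isolated as above. Write $\cosh(s)^n=\exp(n\log\cosh s)$ with $\log\cosh s=s^2/2+O(s^4)$. Then only the constant term of $Q_{\Ahat}(2s)^{2m+1}$ contributes to $[n^q][s^{2q}]$, so the leading coefficient of $P_{m,q}$ is $(1-4^{m-q})/(2^q q!)\neq 0$.
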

The assertion is about the rational span of such bordism classes; it does not assert that every integral class in $J$ has a connected Ricci-positive representative.

The proof is degreewise. In degree $4m$, the quotient $(J/J^2)_{4m}$ has dimension $m-1$. We construct $m$ Ricci-positive Spin manifolds of dimension $4m$ and show that their images generate this quotient. The manifolds are complete intersections of an odd number of quadrics. The linear independence is detected by a square-zero deformation of the $\Ahat$-genus. Dessai previously verified the corresponding spanning statement by computer calculation in a finite range of dimensions~\cite{Dessai} (see also~\cite{Reiser2024}); the argument below gives a uniform proof in all degrees.

\medskip

We benefited at several points from discussions with GPT-5.5 Pro. All mathematical arguments and computations were verified by the authors.


\section{Geometric background}

This section records the geometric facts used in the proof. The only analytic input needed for the construction is Yau's prescribed Ricci form theorem and the Lichnerowicz vanishing theorem.


\subsection{Products and positive Ricci curvature}

\begin{proposition}\label{prop:product-ricci}
If $M$ and $N$ admit Riemannian metrics with $\Ric>0$, then $M\times N$ admits a Riemannian metric with $\Ric>0$.
\end{proposition}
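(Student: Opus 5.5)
The plan is to take the product metric $g=g_M\oplus g_N$ on $M\times N$ and verify directly that its Ricci tensor is positive definite. The proof rests on the decomposition of the Levi-Civita connection of a Riemannian product. At a point $(p,q)$ we have $T_{(p,q)}(M\times N)=T_pM\oplus T_qN$. The Levi-Civita connection of $g$ preserves this splitting, and for lifts of vector fields from the two factors it is computed factorwise. Consequently the curvature tensor splits as $R^g=R^{g_M}\oplus R^{g_N}$, and every mixed curvature term $R(X,U)V$ with $X$ tangent to $M$ and $U$ tangent to $N$ vanishes.

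The next step is to compute the Ricci tensor in an adapted orthonormal basis. Choose an orthonormal basis $e_1,\dots,e_a$ of $T_pM$ and $f_1,\dots,f_b$ of $T_qN$. For a vector $v=X+U$ with $X\in T_pM$ and $U\in T_qN$, the trace over this basis gives
\[
\Ric^g(v,v)=\Ric^{g_M}(X,X)+\Ric^{g_N}(U,U).
\]
There is no cross term because $\Ric^g(X,U)=\sum_i g(R(e_i,X)U,e_i)+\sum_j g(R(f_j,X)U,f_j)=0$, since each summand involves a mixed curvature term.

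Finally, the two hypotheses finish the argument. The metric $g_M$ has $\Ric^{g_M}>0$ and $g_N$ has $\Ric^{g_N}>0$. For $v\neq 0$, at least one of $X$ and $U$ is nonzero, so the sum above is strictly positive. Hence $\Ric^g>0$ everywhere on $M\times N$.

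No step is a real obstacle; the only point needing care is the vanishing of the mixed curvature terms, which comes from the factorwise structure of the product connection. If $M$ and $N$ are compact, as they are throughout the paper, one even gets a uniform lower bound $\Ric^g\ge \min(\kappa_M,\kappa_N)\,g$ from uniform bounds $\Ric^{g_M}\ge\kappa_M g_M$ and $\Ric^{g_N}\ge\kappa_N g_N$. This bound is not needed for the statement. The argument does not cover a zero-dimensional factor (a point), since such a factor has empty tangent space and no Ricci curvature. This is harmless, because Ricci positivity for a point is vacuous and $M\times\{pt\}=M$.
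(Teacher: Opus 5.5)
Your proposal is correct and follows the same route as the paper: take the product metric, use the splitting of the Levi-Civita connection and curvature to get $\Ric_g(v,v)=\Ric_{g_M}(v_M,v_M)+\Ric_{g_N}(v_N,v_N)$, and conclude positivity because a nonzero $v$ has a nonzero component. Your explicit cross-term check and the remarks on uniform bounds and point factors are harmless additions not needed for the statement.
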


\begin{proof}
Let $g_M$ and $g_N$ be metrics with positive Ricci curvature, and equip $M\times N$ with the product metric
\[
g=g_M\times g_N.
\]
The Levi-Civita connection and curvature tensor split. For $v=(v_M,v_N)\in T_{(p,q)}(M\times N)$ one has
\[
\Ric_g(v,v)=\Ric_{g_M}(v_M,v_M)+\Ric_{g_N}(v_N,v_N).
\]
If $v\ne 0$, then at least one component is nonzero, and the corresponding summand is positive. Hence $\Ric_g>0$.
\end{proof}

In contrast, a product has positive scalar curvature if one factor has positive scalar curvature and is rescaled sufficiently.
This stronger stability is not available for positive Ricci curvature, which is why the proof below constructs multiple representatives degree by degree.


\subsection{The Lichnerowicz theorem}

\begin{proposition}[Lichnerowicz~\cite{Lichnerowicz}]\label{prop:lichnerowicz}
If $M$ is a Spin manifold admitting a metric with positive scalar curvature, then
\[
\Ahat(M)=0.
\]
In particular, every Ricci-positive Spin manifold has vanishing $\Ahat$-genus.
\end{proposition}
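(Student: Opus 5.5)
The statement just above is the Lichnerowicz theorem. I would prove it by the Weitzenb\"ock formula combined with the Atiyah--Singer index theorem, and then deduce the Ricci-positive case by taking the trace.

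Suppose first that $\dim M=4k$; in other dimensions the $\Ahat$-genus is zero by definition, so there is nothing to prove. Fix a Spin structure and a metric $g$ with $\Scal_g>0$, and let $D$ be the Dirac operator on the complex spinor bundle $S=S^+\oplus S^-$. The first step is to establish the Schr\"odinger--Lichnerowicz formula
\[
D^2=\nabla^*\nabla+\tfrac14\Scal_g .
\]
The computation is local. In a synchronous orthonormal frame, write $D^2=\sum_{i,j}e_ie_j\nabla_i\nabla_j$. Split this into its symmetric part, which gives $-\sum_i\nabla_i\nabla_i=\nabla^*\nabla$, and its antisymmetric part, which gives $\tfrac12\sum_{i,j} e_ie_jR(e_i,e_j)$. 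Here the spinor curvature is $R(e_i,e_j)=\tfrac14\sum_{k,l}R_{ijkl}e_ke_l$. Using the first Bianchi identity, the Clifford sum $\sum R_{ijkl}e_ie_je_ke_l$ reduces to $2\Scal$, which yields the factor $\tfrac14$. I expect this Clifford-algebra bookkeeping, with its signs and factors of $2$, to be the main technical point. Nothing else in the argument is delicate.

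Next I would integrate against a harmonic spinor. If $D\psi=0$ then
\[
0=\int_M\langle D^2\psi,\psi\rangle=\int_M|\nabla\psi|^2+\tfrac14\int_M\Scal_g|\psi|^2 .
\]
Since $\Scal_g>0$ everywhere, this forces $\psi=0$. Hence $\ker D=0$, and in particular $\ker D^+=\ker D^-=0$.

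By the Atiyah--Singer index theorem, $\Ahat(M)=\operatorname{ind}(D^+)=\dim\ker D^+-\dim\ker D^-$, and this is $0$ by the previous step.

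For the second sentence of the statement, suppose $\Ric_g>0$. Then $\Scal_g=\operatorname{tr}_g\Ric_g>0$, because $\Scal_g$ is the sum of $\Ric_g(e_i,e_i)$ over an orthonormal basis. The first part therefore applies and gives $\Ahat(M)=0$.
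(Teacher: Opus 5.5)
Your proposal is correct and follows the paper's proof essentially step for step. Both use the formula $D^2=\nabla^*\nabla+\tfrac14\Scal$, integrate to show that harmonic spinors vanish, apply Atiyah--Singer to identify the vanishing index with the coefficient of $u^m$ in $\Ahat(M)$, and use $\Scal=\operatorname{tr}\Ric>0$ for the Ricci-positive case. The only difference is that you sketch the Clifford-algebra derivation of the Weitzenb\"ock formula, which the paper simply cites from Lichnerowicz.
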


\begin{proof}
Let $D$ be the Spin Dirac operator. The Lichnerowicz formula~\cite{Lichnerowicz} is
\[
D^2=\nabla^*\nabla+\frac14\Scal.
\]
For a spinor $\psi$, integration gives
\[
\|D\psi\|_{L^2}^2
=
\|\nabla\psi\|_{L^2}^2
+\frac14\int_M \Scal\,|\psi|^2\,d\mu.
\]
If $\Scal>0$ and $D\psi=0$, then both terms on the right vanish and therefore $\psi=0$. Thus the Dirac operator has zero index. By the Atiyah-Singer index theorem, in real dimension $4m$ this index is the coefficient of $u^m$ in $\Ahat(M)$ \cite{AtiyahSinger}. Since $\Ric>0$ implies $\Scal>0$, the final assertion follows.
\end{proof}

Stolz's theorem gives the corresponding existence theorem for scalar curvature: if $M$ is simply connected, Spin, and of dimension at least $5$, then $M$ admits a metric of positive scalar curvature if and only if the $\alpha$-invariant in $KO_{\dim M}(\mathrm{pt})$ vanishes~\cite{Stolz}. In dimensions divisible by $4$, the rational part of this obstruction is detected by the $\Ahat$-genus. The present theorem is a rational Spin-bordism analogue for positive Ricci curvature. It is unknown whether surgery or other methods can provide results for positive Ricci curvature comparable to those known for positive scalar curvature.


\subsection{Fano manifolds and positive Ricci curvature}

\begin{proposition}[Yau's prescribed Ricci form theorem]\label{prop:yau-prescribed}
Let $X$ be a compact K\"ahler manifold. For every real closed $(1,1)$-form $\rho$ representing the real class $2\pi c_1(X)$ and every K\"ahler class $[\omega_0]$, there exists a unique K\"ahler metric $\omega\in[\omega_0]$ such that
\[
\Ric(\omega)=\rho.
\]
\end{proposition}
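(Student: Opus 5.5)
This is Yau's solution of the Calabi conjecture, so the plan is to follow Yau's continuity method: reduce the prescribed-Ricci equation to a complex Monge--Amp\`ere equation and solve it by a priori estimates.

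\textbf{Step 1: reduction to a scalar equation.} Since $\Ric(\omega_0)$ and $\rho$ both represent $2\pi c_1(X)$, the $\partial\bar\partial$-lemma on the compact K\"ahler manifold $X$ gives a smooth real function $F$ with
\[
\rho=\Ric(\omega_0)-i\partial\bar\partial F.
\]
Normalize $F$ by adding a constant so that $\int_X e^F\omega_0^n=\int_X\omega_0^n$. Every K\"ahler form in $[\omega_0]$ can be written $\omega_\varphi=\omega_0+i\partial\bar\partial\varphi$, with $\varphi$ unique up to constants. Locally $\Ric(\omega)=-i\partial\bar\partial\log\det\omega$. Hence $\Ric(\omega_\varphi)=\rho$ is equivalent to $i\partial\bar\partial\bigl(\log(\omega_\varphi^n/\omega_0^n)-F\bigr)=0$. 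On a compact manifold a pluriharmonic function is constant, and the volume normalization forces that constant to be zero. So the problem becomes the complex Monge--Amp\`ere equation
\[
(\omega_0+i\partial\bar\partial\varphi)^n=e^F\omega_0^n,\qquad \omega_0+i\partial\bar\partial\varphi>0,\qquad \sup_X\varphi=0 .
\]

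\textbf{Step 2: uniqueness.} Suppose $\varphi_1,\varphi_2$ are two solutions and set $\psi=\varphi_1-\varphi_2$. Then $0=\omega_1^n-\omega_2^n=i\partial\bar\partial\psi\wedge T$, where $T=\sum_k\omega_1^k\wedge\omega_2^{n-1-k}$ is a positive form. Multiplying by $\psi$ and integrating by parts gives $\int_X i\partial\psi\wedge\bar\partial\psi\wedge T=0$. Therefore $\psi$ is constant, and the two K\"ahler metrics $\omega_1,\omega_2$ coincide.

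\textbf{Step 3: existence by continuity.} Consider the family of equations
\[
(\omega_0+i\partial\bar\partial\varphi_t)^n=c_te^{tF}\omega_0^n,
\]
where $c_t$ is the volume normalization. Let $S\subset[0,1]$ be the set of $t$ for which a solution exists; $0\in S$ via $\varphi_0=0$. Openness comes from the implicit function theorem in H\"older spaces $C^{k,\alpha}$. The linearization is the Laplacian $\Delta_{\omega_t}$, which is an isomorphism between functions of mean zero, so solutions persist for nearby $t$. Closedness requires a priori bounds in $C^{k,\alpha}$ that are uniform in $t$, followed by Arzel\`a--Ascoli. Once $S=[0,1]$, the solution at $t=1$ gives the desired metric.

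\textbf{Step 4: a priori estimates, carried out in order.}
\begin{enumerate}
\item Yau's $C^0$ estimate for $\sup|\varphi|$, via Moser iteration. This uses $n+\Delta_{\omega_0}\varphi>0$ and the Green-function bound $\sup\varphi-\int\varphi\le C$.
\item The Aubin--Yau $C^2$ estimate. One applies the maximum principle to $\log\operatorname{tr}_{\omega_0}\omega_\varphi-A\varphi$, with $A$ chosen larger than a lower bound on the bisectional curvature of $\omega_0$. This yields a uniform equivalence $C^{-1}\omega_0\le\omega_\varphi\le C\omega_0$.
\item The $C^{2,\alpha}$ estimate, from Evans--Krylov since the operator is concave, or from Calabi's third-order estimate.
\item Higher regularity by Schauder bootstrapping.
\end{enumerate}

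The main obstacle is the $C^0$ estimate. The $C^2$ bound depends on it, and it is exactly where Yau's argument is decisive, so I would invest most effort there. Given how the paper uses this result, it would in practice be cited from \cite{Yau} rather than reproved. The consequence needed later is immediate: on a Fano manifold, choose $\rho$ to be a positive $(1,1)$-form in $2\pi c_1(X)$, for instance a multiple of the Fubini--Study form restricted to a complete intersection. The resulting K\"ahler metric then has $\Ric>0$.
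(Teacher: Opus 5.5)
Your proposal is correct and follows the same route as the paper. Both use the $\partial\bar\partial$-lemma to write $\Ric(\omega_0)-\rho=\sqrt{-1}\,\partial\bar\partial F$ and reduce to the complex Monge--Amp\`ere equation $(\omega_0+\sqrt{-1}\,\partial\bar\partial\varphi)^n=e^F\omega_0^n$, whose solvability is Yau's theorem; the paper simply cites Yau for that step, while you also outline the continuity method and a priori estimates and supply Calabi's integration-by-parts uniqueness argument, which the paper's sketch does not address despite the statement claiming uniqueness.
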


\begin{proof}[Proof (Sketch)]
The forms $\Ric(\omega_0)$ and $\rho$ represent the same cohomology class, so
\[
\Ric(\omega_0)-\rho=\sqrt{-1}\,\partial\bar\partial h
\]
for a smooth real function $h$. Yau's solution of the Calabi conjecture solves the complex Monge--Amp\`ere equation
\[
(\omega_0+\sqrt{-1}\,\partial\bar\partial\varphi)^n=C e^h\omega_0^n,
\qquad
\omega_0+\sqrt{-1}\,\partial\bar\partial\varphi>0,
\]
where $n=\dim_{\C}X$ and $C>0$ is chosen so that the total volumes agree \cite{Yau}. The metric $\omega=\omega_0+\sqrt{-1}\,\partial\bar\partial\varphi$ then satisfies $\Ric(\omega)=\rho$.
\end{proof}

\begin{corollary}\label{cor:fano-ricci}
Every smooth Fano manifold admits a K\"ahler metric with $\Ric>0$.
\end{corollary}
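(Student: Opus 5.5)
The plan is to apply Yau's prescribed Ricci form theorem (Proposition~\ref{prop:yau-prescribed}) to a positive representative of the first Chern class. Let $X$ be a smooth Fano manifold of complex dimension $n$. By definition, $c_1(X)$ is represented by a positive $(1,1)$-form; equivalently, the anticanonical bundle $K_X^{-1}$ is ample. I will fix a concrete choice. By Kodaira embedding, some power $K_X^{-k}$ is very ample, and we may pull back the Fubini--Study metric along the induced embedding $X\hookrightarrow\CP^N$. This gives a Hermitian metric on $K_X^{-1}$ whose Chern curvature form, call it $\rho/(2\pi)$, is a closed real $(1,1)$-form representing $c_1(X)$ that is strictly positive at every point. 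So $\rho$ is a positive representative of $2\pi c_1(X)$. Note that $X$ is Kähler: $\rho$ itself, or the restriction of the Fubini--Study form, is a Kähler form. Hence we may take $[\omega_0]=[\rho]$, or indeed any Kähler class.

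Next, Proposition~\ref{prop:yau-prescribed} yields a Kähler metric $\omega$ with $\Ric(\omega)=\rho$. The remaining step is to translate positivity of the Ricci form into positivity of the Riemannian Ricci tensor. For a Kähler metric $g$ with complex structure $J$ (here $J$ is the complex structure, not the ideal of the introduction), the Ricci form is $\Ric(\omega)(X,Y)=\Ric_g(JX,Y)$. Therefore $\Ric_g(v,v)=\Ric(\omega)(v,Jv)$. A positive $(1,1)$-form $\rho$ satisfies $\rho(v,Jv)>0$ for all $v\neq 0$, so $\Ric_g>0$ pointwise. Since $X$ is compact, no uniformity issue arises.

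The only potential subtlety is matching conventions: the normalization of $2\pi$, and the sign relation between the Ricci form and $c_1$. This matters for checking that the positive representative supplied by ampleness has the right sign to be a Ricci form. I will fix the convention that $\Ric(\omega)=-\sqrt{-1}\,\partial\bar\partial\log\det(g_{i\bar j})$ represents $2\pi c_1(X)$, consistent with Proposition~\ref{prop:yau-prescribed}. The curvature of the metric induced on $K_X^{-1}$ is then exactly of this form, so no genuine obstacle remains.
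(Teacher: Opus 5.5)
Your proposal is correct and follows the paper's argument: choose a positive representative $\rho$ of $2\pi c_1(X)$, apply Yau's prescribed Ricci form theorem (Proposition~\ref{prop:yau-prescribed}) to get $\Ric(\omega)=\rho$, and observe that positivity of the Ricci form is positivity of the Ricci tensor. You additionally spell out two steps the paper leaves implicit: producing $\rho$ by pulling back the Fubini--Study metric through an embedding given by a power of $K_X^{-1}$, and the identity $\Ric_g(v,v)=\Ric(\omega)(v,Jv)$.
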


\begin{proof}
Recall that a compact complex manifold \(X\) is called Fano if the determinant line bundle
\(\det_{\C}(TX)=K_X^*\) is ample.
Equivalently, $c_1(X)$ is represented by a positive real $(1,1)$-form. Choose a positive representative $\rho\in 2\pi c_1(X)$ and apply Proposition \ref{prop:yau-prescribed}. Positivity of the Ricci form is positivity of the Ricci tensor for the underlying Riemannian metric.
\end{proof}


\subsection{Quadric complete intersections}

Let \(\gamma_N\longrightarrow\CP^N\) be the tautological complex line bundle,
whose fibre over a point \([L]\in\CP^N\) is the line \(L\subset\C^{N+1}\).
We normalize
\[
g=c_1({\gamma_N^*})\in H^2(\CP^N;\mathbb Z)
\]
so that \(g\) restricts to the positive generator of \(H^2(\CP^1;\mathbb Z)\)
for the complex orientation of \(\CP^1\).

For integers \(m\geq 2\) and \(1\leq \ell\leq 2m-1\), let
\[
Y_{m,\ell}=X_{\underbrace{\scriptstyle 2,\ldots,2}_{\ell}}\subset \CP^{2m+\ell}
\]
be a smooth complete intersection of \(\ell\) quadrics. Such smooth complete
intersections exist: choosing the equations successively among quadratic
homogeneous equations, equivalently among sections of
\((\gamma_{2m+\ell}^*)^{\otimes 2}\), and applying Bertini's theorem at each
step gives a smooth complete intersection; equivalently, one may apply
Bertini after the quadratic Veronese embedding; see
Hartshorne~\cite[II, Theorem 8.18]{Hartshorne}.

\begin{proposition}
If \(\ell\) is odd and \(1\leq \ell\leq 2m-1\), then \(Y_{m,\ell}\)
is Spin and Fano. Consequently \(Y_{m,\ell}\) admits a metric with
\(\operatorname{Ric}>0\), and
\[ [Y_{m,\ell}]\in J_{4m}. \]
\end{proposition}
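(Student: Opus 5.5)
The plan is to compute the tangent bundle of \(Y=Y_{m,\ell}\) through the normal bundle sequence, read off \(c_1\) and \(w_2\), and then invoke Corollary~\ref{cor:fano-ricci} and Proposition~\ref{prop:lichnerowicz}.

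First, \(Y\) has complex dimension \((2m+\ell)-\ell=2m\), so it is a closed manifold of real dimension \(4m\). Write \(N=2m+\ell\) and \(H=\gamma_N^*\). Since \(Y\) is cut out transversally by \(\ell\) sections of \(H^{\otimes 2}\), its normal bundle is \(\nu_Y\cong (H^{\otimes 2})^{\oplus\ell}|_Y\). We have the exact sequences
\[
0\to TY\to T\CP^N|_Y\to (H^{\otimes 2})^{\oplus \ell}|_Y\to 0
\]
and the Euler sequence \(T\CP^N\oplus\underline{\C}\cong H^{\oplus(N+1)}\). In \(K\)-theory these give \(TY\oplus\underline{\C}\oplus \ell H^{2}|_Y\cong (N+1)H|_Y\). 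Taking total Chern classes yields
\[
c(Y)=\frac{(1+g)^{N+1}}{(1+2g)^{\ell}}\Big|_Y,
\qquad
c_1(Y)=(N+1-2\ell)\,g|_Y=(2m+1-\ell)\,g|_Y .
\]

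For the Fano property, the condition \(\ell\le 2m-1\) gives \(2m+1-\ell\ge 2>0\). Hence \(K_Y^*\cong H^{\otimes(2m+1-\ell)}|_Y\) is the restriction of a positive power of the hyperplane bundle, and this restriction is ample. Alternatively, the restriction of a positive multiple of the Fubini--Study form is a positive \((1,1)\)-form representing \(c_1(Y)\). Corollary~\ref{cor:fano-ricci} then supplies a Kähler metric with \(\Ric>0\).

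For the Spin property, we have \(w_2(Y)=c_1(Y)\bmod 2=(2m+1-\ell)\,g|_Y \bmod 2\). Since \(\ell\) is odd, \(2m+1-\ell\) is even, so \(w_2(Y)=0\). We also need \(Y\) to be connected and oriented. The complex structure orients it, and connectedness holds because a smooth complete intersection of positive dimension is connected (Lefschetz hyperplane theorem, or Hartshorne). Connectedness is not even needed, since the Spin structure exists on each component regardless. Thus \(Y\) is Spin, and a Spin structure can be chosen. Its bordism class then lies in \(\Omega^{\Spin}_{4m}\otimes\Q\). Because \(\Ric>0\) implies \(\Scal>0\), Proposition~\ref{prop:lichnerowicz} gives \(\Ahat(Y)=0\), and therefore \([Y_{m,\ell}]\in J_{4m}\).

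The only step needing care is the identification of the normal bundle together with the sign convention for \(g\), that is, checking that \(c_1(H)=g\) is the positive class. This is exactly what makes \(c_1(Y)\) a \emph{positive} multiple of the ample class, and with the normalization fixed above it is routine. The parity computation, which is where oddness of \(\ell\) is used, and the Fano bound, which is where \(\ell\le 2m-1\) is used, are then immediate.
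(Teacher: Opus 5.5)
Your proof is correct and follows the paper's argument. The normal bundle sequence gives $c_1(Y_{m,\ell})=(2m+1-\ell)\,g|_{Y_{m,\ell}}$, oddness of $\ell$ kills $w_2$, the bound $\ell\le 2m-1$ makes $K^*_{Y_{m,\ell}}$ a positive power of the restricted hyperplane bundle, and Corollary~\ref{cor:fano-ricci} together with Proposition~\ref{prop:lichnerowicz} finishes. The only cosmetic difference is that you get $c_1$ from total Chern classes via the Euler sequence, whereas the paper takes determinants of the tangent--normal sequence, which tacitly uses the same fact $\det_{\C}T\CP^N\cong(\gamma_N^*)^{\otimes(N+1)}$.
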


\begin{proof}
Put \(N=2m+\ell\), and let $i:Y_{m,\ell}\hookrightarrow \CP^N$ be the inclusion.
Since \(Y_{m,\ell}\) is cut out transversely by \(\ell\) quadratic equations,
its complex normal bundle is
\[
\nu
\cong
\bigoplus_{j=1}^{\ell}{\gamma_N^*}^{\otimes 2}|_{Y_{m,\ell}}.
\]
The tangent-normal exact sequence is
\[
0\longrightarrow TY_{m,\ell}
\longrightarrow T\CP^N|_{Y_{m,\ell}}
\longrightarrow \nu
\longrightarrow 0.
\]
Taking complex determinants of this exact sequence gives
\[
K_{Y_{m,\ell}}^*
=
\det\nolimits_{\C}(TY_{m,\ell})
\cong
{\gamma_N^*}^{\otimes(N+1-2\ell)}|_{Y_{m,\ell}}
=
{\gamma_N^*}^{\otimes(2m+1-\ell)}|_{Y_{m,\ell}}
\]
Hence
\[
c_1(TY_{m,\ell})
=
(2m+1-\ell)\,c_1(\gamma_N^*|_{Y_{m,\ell}})
=
(2m+1-\ell)\,i^*g.
\]

\smallskip

Since \(w_2\equiv c_1\pmod 2\) for complex manifolds, \(Y_{m,\ell}\)
is Spin when \(\ell\) is odd. If \(\ell\leq 2m-1\), then
\(2m+1-\ell>0\), so \(K_{Y_{m,\ell}}^*\) is a positive tensor power of
\({\gamma_N^*}|_{Y_{m,\ell}}\). Thus \(K_{Y_{m,\ell}}^*\) is ample, and \(Y_{m,\ell}\)
is Fano.

\smallskip

Corollary~\ref{cor:fano-ricci} gives a metric with \(\operatorname{Ric}>0\), and
Proposition~\ref{prop:lichnerowicz} gives \(\widehat A(Y_{m,\ell})=0\). Therefore
\([Y_{m,\ell}]\in J_{4m}\).
\end{proof}

\section{Proof of Theorem}

\subsection{The quotient \texorpdfstring{$J/J^2$}{J/J2}}

We first explain the elementary algebra needed for the induction. Since a K3
surface, for example a quartic in $\CP^3$, has the $\Ahat$-genus $\Ahat(K3)=2u$, the genus
\[
\Ahat:R\longrightarrow \Q[u]
\]
is surjective, and hence $R/J\cong\Q[u]$.

\begin{proposition}\label{prop:J-J2}
The $R/J\cong\Q[u]$-module $J/J^2$ is free with one generator in each degree
$4k$, $k\ge 2$. In particular,
\[
J_0=J_4=0,\qquad
\dim_\Q (J/J^2)_{4m}=m-1
\]
for every $m\ge 2$.
\end{proposition}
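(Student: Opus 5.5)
Our approach is to choose polynomial generators of $R$ adapted to $\Ahat$, so that $J$ becomes a standard ideal of a polynomial ring. We then read off $J/J^2$ directly.

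\emph{Step 1: choice of generators.} Since $\Ahat(K3)=2u\neq 0$, we may take $x_4=[K3]$ as the degree-$4$ generator, so $\Ahat(x_4)=2u$. For every $k\ge 2$ and any chosen generator $x_{4k}$, we have $\Ahat(x_{4k})=c_k u^k$ with $c_k\in\Q$. We replace $x_{4k}$ by
\[
y_{4k}:=x_{4k}-\frac{c_k}{2^k}\,x_4^k .
\]
This is a triangular change of variables, so $R=\Q[x_4,y_8,y_{12},\ldots]$ is still a polynomial algebra on these generators. Moreover $\Ahat(y_{4k})=0$ for $k\ge 2$.

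\emph{Step 2: identify $J$.} The homomorphism $\Ahat$ now sends $x_4\mapsto 2u$ and $y_{4k}\mapsto 0$. Its kernel is therefore the ideal $J=(y_8,y_{12},\ldots)$. Indeed, modulo this ideal every element is a polynomial in $x_4$, and $\Ahat$ is injective on $\Q[x_4]$. This recovers $R/J\cong\Q[x_4]\cong\Q[u]$, consistent with surjectivity.

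\emph{Step 3: compute $J/J^2$.} In a polynomial ring $A[y_1,y_2,\ldots]$ with $A=\Q[x_4]$, the ideal $J=(y_i)$ satisfies $J/J^2\cong\bigoplus_i A\cdot\bar y_i$, a free $A$-module on the classes of the $y_i$. To see this, decompose $R$ by total degree in the $y$-variables: $J$ is the span of monomials of $y$-degree $\ge1$, and $J^2$ is the span of those of $y$-degree $\ge2$. Hence $J/J^2$ is the span of monomials $x_4^a y_{4k}$, which form a basis. So $J/J^2$ is free over $R/J\cong\Q[u]$ with one generator $\bar y_{4k}$ in each degree $4k$, $k\ge2$.

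\emph{Step 4: degree counts.} $J_0=0$ because $\Ahat$ is unital. $J_4=0$ because $R_4=\Q x_4$ and $\Ahat(x_4)\ne0$. The degree-$4m$ part of $J/J^2$ has basis $x_4^{m-k}\bar y_{4k}$ for $2\le k\le m$, which gives $m-1$ elements.

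No step is a real obstacle. The only point needing care is Step 1: the triangular substitution must preserve algebraic independence, and the normalization must use $\Ahat(x_4)\neq0$, which is exactly where the K3 surface enters.
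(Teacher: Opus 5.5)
Your proposal is correct and follows essentially the same route as the paper: a triangular change of polynomial generators so that $J=(y_8,y_{12},\ldots)$, after which the free basis $\bar y_{4k}$ of $J/J^2$ and the count $m-1$ are read off. The only differences are cosmetic. You normalize with $x_4=[K3]$ and $\Ahat(x_4)=2u$ instead of choosing $z_4$ with $\Ahat(z_4)=u$. You also spell out the $y$-degree monomial argument for freeness and for $\ker\Ahat=(y_{4k})$, which the paper leaves implicit.
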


\begin{proof}
Choose homogeneous polynomial generators
\[
R=\Q[z_4,z_8,z_{12},\ldots],\qquad \deg(z_{4k})=4k,
\]
with $\Ahat(z_4)=u$. For $k\ge 2$ write
\[
\Ahat(z_{4k})=a_k u^k,\qquad a_k\in\Q,
\]
and set
\[
y_{4k}:=z_{4k}-a_k z_4^k.
\]
This triangular change gives
\[
R=\Q[z_4,y_8,y_{12},\ldots],
\]
and all $y_{4k}$ lie in $J$. Since $z_4$ maps to $u$, it follows that
\[
J=\langle y_8,y_{12},y_{16},\ldots \rangle
\]
and therefore
\[
J/J^2\cong \bigoplus_{k\ge 2}\Q[u]\cdot \overline y_{4k}.
\]
The degree $4m$ part has basis
\[
u^{m-k}\,\overline y_{4k},\qquad 2\le k\le m,
\]
so its dimension is $m-1$. The statements $J_0=J_4=0$ are immediate from the
same description.
\end{proof}
More concretely, one may take the $y_{4k}$ to be the bordism classes of the 
quaternionic projective spaces $\HP^{k}$, $k\geq 2$.

\begin{proposition}\label{prop:algebraic-reduction}
Assume that for every $m\ge 2$ there exist Ricci-positive Spin manifolds
$Z_{m,1},\ldots,Z_{m,N_m}$ of real dimension $4m$ whose classes span
$(J/J^2)_{4m}$. Then Theorem~\ref{thm:main} holds.
\end{proposition}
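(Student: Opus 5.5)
We argue by strong induction on the degree $n$. The inclusion $I^{\Spin,\Ric>0}\subseteq J$ is Proposition~\ref{prop:lichnerowicz}, so only $J_n\subseteq I_n^{\Spin,\Ric>0}$ needs proof. Rationally $R$ is concentrated in degrees divisible by $4$. Hence $J_n=0$ for $4\nmid n$, and by Proposition~\ref{prop:J-J2} also for $n=0,4$; in these cases there is nothing to show. Fix $m\ge 2$ and assume $J_{4k}\subseteq I_{4k}^{\Spin,\Ric>0}$ for all $k<m$.

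First we show $(J^2)_{4m}\subseteq I_{4m}^{\Spin,\Ric>0}$. The space $(J^2)_{4m}$ is spanned by products $ab$ with $a\in J_{4k}$, $b\in J_{4(m-k)}$ and $1\le k\le m-1$; we may take $a,b$ homogeneous since $J$ is a graded ideal. Because $J_0=0$, both degrees are strictly between $0$ and $4m$. By the induction hypothesis $a=\sum_i \alpha_i[M_i]$ and $b=\sum_j\beta_j[N_j]$ with $M_i,N_j$ Ricci-positive Spin manifolds. Then $ab=\sum\alpha_i\beta_j[M_i\times N_j]$. Each $M_i\times N_j$ is Spin, and it is Ricci-positive by Proposition~\ref{prop:product-ricci}. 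So $ab\in I_{4m}^{\Spin,\Ric>0}$.

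Second, the hypothesis provides $Z_{m,1},\dots,Z_{m,N_m}$ whose classes lie in $J_{4m}$ (by Proposition~\ref{prop:lichnerowicz}) and whose images span $(J/J^2)_{4m}$. Given $x\in J_{4m}$, choose rationals $c_i$ with $x-\sum c_i[Z_{m,i}]\in (J^2)_{4m}$. By the first step this difference lies in $I_{4m}^{\Spin,\Ric>0}$, and therefore so does $x$. This completes the induction and gives $I^{\Spin,\Ric>0}=J$ degreewise. Since $J$ is an ideal of $R$, the final assertions follow immediately. In particular, the fact that $I^{\Spin,\Ric>0}$ is a graded nonunital subalgebra was already noted in the introduction.

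There is no serious obstacle here. The only point to watch is the first step: products must involve factors of positive degree, so that the induction hypothesis applies and no unit factor appears. This is exactly why $J_0=0$ is needed, together with the fact that $(J^2)_{4m}$ is spanned by products of homogeneous elements of $J$.
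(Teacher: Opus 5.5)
Your proposal is correct and follows essentially the same route as the paper. It argues by induction on degree, first obtaining $(J^2)_{4m}\subseteq I_{4m}^{\Spin,\Ric>0}$ from the induction hypothesis and closure under products (Proposition~\ref{prop:product-ricci}), and then lifting via the spanning hypothesis modulo $J^2$. You are somewhat more explicit than the paper about homogeneity, the role of $J_0=0$, and the vanishing in degrees not divisible by $4$, but the argument is the same.
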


\begin{proof}
Put $I:=I^{\Spin,\Ric>0}$. Proposition~\ref{prop:lichnerowicz} gives
$I\subseteq J$. We prove $J\subseteq I$ by induction on the degree. The cases
of degrees $0$ and $4$ follow from Proposition~\ref{prop:J-J2}.

Assume $m\ge 2$ and $I_e=J_e$ for all $e<4m$. Then
\[
(J^2)_{4m}\subseteq I_{4m},
\]
because every product contributing to $(J^2)_{4m}$ has both factors in positive
degrees smaller than $4m$, hence in $I$ by induction; and $I$ is a graded
subalgebra.

Let $x\in J_{4m}$. By the spanning hypothesis there are $a_1$, $\ldots$, $a_{N_m}\in\Q$ such that
\[
x-\sum_{i=1}^{N_m}a_i[Z_{m,i}]\in (J^2)_{4m}.
\]
The classes $[Z_{m,i}]$ lie in $I_{4m}$, and the remaining term lies in
$I_{4m}$ by the previous paragraph. Hence $x\in I_{4m}$. This completes the
induction.
\end{proof}

\subsection{The thickened genus}

Let
\[
S:=\Q[u]\oplus \eps\,\Q[u,v]\subset \Q[u,v,\eps]/\langle \eps^2\rangle,
\qquad
\deg(u)=\deg(v)=\deg(\eps)=4.
\]
Define a genus $\Phi:R\longrightarrow S$ by the characteristic power series
\[
Q_\Phi(x)=Q_{\Ahat}(\sqrt u\,x)\left(1+\eps\rho(x)\right),
\qquad
\rho(x):=\frac{vx^4}{1-vx^2}.
\]
Write
\[
\Phi(X)=\Ahat(X)+\eps E(X).
\]

\begin{lemma}\label{lem:E-Qulinear}
If $X\in J$, then for all $Y\in R$,
\[
E(XY)=\Ahat(Y)E(X).
\]
Consequently $E$ vanishes on $J^2$ and induces a $\Q[u]$-linear map
\[
\overline E:J/J^2\longrightarrow \Q[u,v].
\]
\end{lemma}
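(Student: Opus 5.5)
The plan is to use only that $\Phi$ is a genus, i.e.\ a unital ring homomorphism $R\to S$, together with the multiplication rule $\eps^2=0$ in $S$. First I will check that the definition makes sense. The product $Q_{\Ahat}(\sqrt u\,x)(1+\eps\rho(x))$ is an even power series in $x$ whose constant term is $1$, with coefficients in $\Q[u]\oplus\eps\,\Q[u,v]$. Hence Hirzebruch's construction yields a multiplicative genus $\Phi:R\to S$. Setting $\eps=0$ recovers $Q_{\Ahat}(\sqrt u\,x)$, so the $\eps^0$-part of $\Phi$ is exactly $\Ahat$. This justifies the decomposition $\Phi(X)=\Ahat(X)+\eps E(X)$ with $E:R\to\Q[u,v]$ linear and homogeneous. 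One degree remark: each term $v^jx^{2j+2}$ of $\eps\rho(x)$ has total weight $4+4j-(2j+2)\cdot 2=0$, since $x$ carries degree $-2$ relative to the grading of $S$. So $\Phi$ is homogeneous, but only the ring-homomorphism property matters below.

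Next I expand multiplicativity. For $X,Y\in R$,
\[
\Phi(XY)=\Phi(X)\Phi(Y)=\Ahat(X)\Ahat(Y)+\eps\bigl(\Ahat(X)E(Y)+\Ahat(Y)E(X)\bigr),
\]
because the $\eps^2$ term vanishes. Comparing $\eps$-coefficients gives the Leibniz rule
\[
E(XY)=\Ahat(X)E(Y)+\Ahat(Y)E(X).
\]
So $E$ is a derivation along $\Ahat$. If $X\in J$, then $\Ahat(X)=0$ and the identity $E(XY)=\Ahat(Y)E(X)$ follows at once.

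For the consequences, take $X,X'\in J$; the identity gives $E(XX')=\Ahat(X')E(X)=0$. By linearity $E$ vanishes on $J^2$, so $E|_J$ descends to a $\Q$-linear map $\overline E$ on $J/J^2$. For $\Q[u]$-linearity, recall that the module structure is given through $R/J\cong\Q[u]$ via $\Ahat$: a class $\bar r\in R/J$ acts on $\overline X\in J/J^2$ by $\overline{rX}$. The identity gives $E(rX)=\Ahat(r)E(X)$, which is exactly $\overline E(\bar r\cdot\overline X)=\Ahat(r)\,\overline E(\overline X)$. Well-definedness is automatic: replacing $r$ by $r+j$ with $j\in J$ changes $rX$ by an element of $J^2$.

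The main obstacle is only bookkeeping. One has to make sure that $\Phi$ really is a ring homomorphism into the stated ring $S$; this holds because $S$ is a subring of $\Q[u,v,\eps]/\langle\eps^2\rangle$. One also has to make sure that $E$ takes values in $\Q[u,v]$ rather than in some completion; this holds because in each degree only finitely many coefficients of $Q_\Phi$ contribute.
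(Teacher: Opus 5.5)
Your proof is correct and follows the paper's argument: you expand $\Phi(XY)=\Phi(X)\Phi(Y)$ using $\eps^2=0$ to get the Leibniz rule $E(XY)=\Ahat(X)E(Y)+\Ahat(Y)E(X)$, then specialize to $X\in J$ to obtain vanishing on $J^2$ and $\Q[u]$-linearity via $R/J\cong\Q[u]$. The extra bookkeeping you supply is correct and fills in details the paper leaves implicit, namely that $\Phi$ is a well-defined homogeneous genus into $S$ and that the module action is independent of the representative $r$.
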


\begin{proof}
Multiplicativity and $\eps^2=0$ give
\[
\Phi(XY)=(\Ahat(X)+\eps E(X))(\Ahat(Y)+\eps E(Y)),
\]
so
\[
E(XY)=\Ahat(X)E(Y)+\Ahat(Y)E(X).
\]
If $X\in J$, then $\Ahat(X)=0$, giving the displayed formula. If also
$Y\in J$, then $E(XY)=0$, so $E(J^2)=0$. The same formula gives
$\Q[u]$-linearity after identifying $R/J$ with $\Q[u]$.
\end{proof}

For $x\in (J/J^2)_{4m}$, the polynomial $\overline E(x)$ is homogeneous of
degree $4m-4$. Since the deformation term $\rho$ is divisible by $v$, there is a unique expansion
\[
\overline E(x)=
\sum_{r=0}^{m-2}\Lambda_{m,r}(x)\,u^{m-r-2}v^{r+1},
\qquad
\Lambda_{m,r}(x)\in\Q.
\]
Thus the thickened genus gives $m-1$ coefficient functionals
\[
\Lambda_{m,r}:(J/J^2)_{4m}\longrightarrow \Q,
\qquad 0\le r\le m-2.
\]
Since $\dim_\Q(J/J^2)_{4m}=m-1$, a family of classes in $(J/J^2)_{4m}$
spans as soon as its matrix of $\Lambda_{m,r}$-values has rank $m-1$.


\subsection{Evaluation on complete intersections of quadrics}

Recall that $\gamma_N$ denotes the tautological line bundle on
$\CP^N$ and $g=c_1(\gamma_N^*)$. We write $[z^N]F(z)$ for the coefficient of $z^N$ of a power series $F(z)$.
If
\[
X=X_{d_1,\ldots,d_c}\subset\CP^N
\]
is a smooth complete intersection of hypersurfaces of degrees $d_1$, $\ldots$, $d_c$,
then the genus with characteristic power series $Q(x)$ equals
\[
\Phi(X)
=
[g^N]\left(
\left(\prod_{i=1}^c d_i \right)g^c
\frac{Q(g)^{N+1}}{\prod_{i=1}^c Q(d_i g)} \right).
\]
This follows from the sequence
\[
0\longrightarrow TX
\longrightarrow T\CP^N|_X
\longrightarrow \bigoplus_{i=1}^c {\gamma_N^*}^{\otimes d_i}|_X
\longrightarrow 0,
\]
for the normal bundle of $X$ and from the fact that the fundamental class of $X$ is Poincar\'e dual to
$\left(\prod_{i=1}^c d_i\right)g^c$.

For
\[
Y_{m,\ell}=X_{\underbrace{\scriptstyle 2,\ldots,2}_{\ell}}\subset\CP^{2m+\ell}
\]
we get
\[
\Phi(Y_{m,\ell})
=
[g^{2m+\ell}]
\left(
2^\ell g^\ell\,
\frac{Q_\Phi(g)^{2m+\ell+1}}{Q_\Phi(2g)^\ell}
\right).
\]

For $0\le q\le m-2$, define polynomials in the variable $n$ by
\[
P_{m,q}(n)
=
\left(2m+1+n(1-4^{m-q})\right)
[s^{2q}]
Q_{\Ahat}(2s)^{2m+1}\cosh(s)^n .
\]

\smallskip
\begin{proposition}\label{prop:Lambda-eval}
Let $m\ge 2$, let $0\le r\le m-2$, let $\ell$ be odd with
$1\le \ell\le 2m-1$, and set $q=m-r-2$. Then
\[
\Lambda_{m,r}([Y_{m,\ell}])
=
2^\ell 4^{-q}P_{m,q}(\ell).
\]
\end{proposition}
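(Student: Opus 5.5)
The plan is to compute $E(Y_{m,\ell})$ directly from the complete-intersection formula displayed above, expanding to first order in $\eps$, and then read off the coefficient of $u^{m-r-2}v^{r+1}$. The key input is the doubling identity for the $\Ahat$-series, which converts the ratio of $Q_{\Ahat}$'s into a power of $\cosh$.

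\emph{Linearise in $\eps$.} Put $N=2m+\ell$. Since $\eps^2=0$,
\[
\frac{Q_\Phi(g)^{N+1}}{Q_\Phi(2g)^\ell}
=\frac{Q_{\Ahat}(\sqrt u\,g)^{N+1}}{Q_{\Ahat}(2\sqrt u\,g)^\ell}\Bigl(1+\eps\bigl((N+1)\rho(g)-\ell\rho(2g)\bigr)\Bigr).
\]
Hence
\[
E(Y_{m,\ell})=[g^{2m}]\Bigl(2^\ell\,\frac{Q_{\Ahat}(\sqrt u\,g)^{2m+\ell+1}}{Q_{\Ahat}(2\sqrt u\,g)^\ell}\bigl((2m+\ell+1)\rho(g)-\ell\rho(2g)\bigr)\Bigr),
\]
where the $g^\ell$ prefactor has been absorbed by lowering $g^{N}$ to $g^{2m}$. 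Expanding $\rho(x)=\sum_{j\ge1}v^jx^{2j+2}$, the $v^{r+1}$ part of the bracket is
\[
\bigl(2m+\ell+1-\ell\,4^{r+2}\bigr)\,g^{2r+4}.
\]
With $q=m-r-2$ we have $r+2=m-q$, so this equals
\[
\bigl(2m+1+\ell(1-4^{m-q})\bigr)\,g^{2m-2q}.
\]
This is exactly the linear prefactor of $P_{m,q}(\ell)$. It remains to extract $[g^{2q}]$ of the $\Ahat$ part.

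\emph{Simplify the $\Ahat$ part.} Substitute $\sqrt u\,g=2s$. Then
\[
Q_{\Ahat}(2s)=\frac{s}{\sinh s},\qquad
Q_{\Ahat}(4s)=\frac{2s}{\sinh 2s}=\frac{s}{\sinh s\cosh s},
\]
so $Q_{\Ahat}(2s)/Q_{\Ahat}(4s)=\cosh s$. Writing $2m+\ell+1=(2m+1)+\ell$, the $\Ahat$ part becomes
\[
\frac{Q_{\Ahat}(2s)^{2m+\ell+1}}{Q_{\Ahat}(4s)^\ell}=Q_{\Ahat}(2s)^{2m+1}\cosh(s)^\ell.
\]
This is an even power series in $s$, so only even powers of $g$ occur and no $\sqrt u$ survives. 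Since $s^{2q}=(u/4)^q g^{2q}$, taking $[g^{2q}]$ yields
\[
4^{-q}u^q\,[s^{2q}]\,Q_{\Ahat}(2s)^{2m+1}\cosh(s)^\ell .
\]

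\emph{Assemble.} Combining the two steps, the coefficient of $u^qv^{r+1}=u^{m-r-2}v^{r+1}$ in $E(Y_{m,\ell})$ is
\[
2^\ell4^{-q}P_{m,q}(\ell).
\]
By Lemma~\ref{lem:E-Qulinear} and the definition of $\Lambda_{m,r}$ as the coefficients of $\overline E$, this coefficient is $\Lambda_{m,r}([Y_{m,\ell}])$, which is the claimed formula.

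The main point requiring care is the bookkeeping of exponents: matching the $g$-degree $2q+(2m-2q)=2m$, checking $u$-homogeneity, and confirming that the $v$-expansion of $\rho(2g)$ contributes the factor $4^{r+2}$. The only non-formal step is the doubling identity for $\sinh$.
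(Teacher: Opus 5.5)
Your proposal is correct and follows the paper's proof step for step: you linearize in $\eps$, isolate the $v^{r+1}$ coefficient $\bigl(2m+1+\ell(1-4^{r+2})\bigr)g^{2r+4}$ from $\rho(g)$ and $\rho(2g)$, use the $\sinh$ doubling identity with $s=\sqrt u\,g/2$ to get $Q_{\Ahat}(2s)^{2m+1}\cosh(s)^\ell$, and extract $[g^{2q}]$ with $q=m-r-2$. Your explicit check that $Q_{\Ahat}(4s)=s/(\sinh s\cosh s)$, and your remark that $\Lambda_{m,r}([Y_{m,\ell}])$ is read off from $E(Y_{m,\ell})$ because $[Y_{m,\ell}]\in J$, only spell out details the paper leaves implicit.
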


\begin{proof}
Since $\eps^2=0$,
\[
\frac{Q_\Phi(g)^{2m+\ell+1}}{Q_\Phi(2g)^\ell}
=
\frac{Q_{\Ahat}(\sqrt u\,g)^{2m+\ell+1}}
     {Q_{\Ahat}(2\sqrt u\,g)^\ell}
\left(
1+\eps\,\bigl((2m+\ell+1)\rho(g)-\ell\,\rho(2g)\bigr)
\right).
\]
Since
\[
\rho(g)=\sum_{a\ge 0}v^{a+1}g^{2a+4},
\qquad
\rho(2g)=\sum_{a\ge 0}v^{a+1}4^{a+2}g^{2a+4},
\]
the coefficient of $v^{r+1}$ in the deformation term is
\[
\left(2m+1+\ell(1-4^{r+2})\right)g^{2r+4}.
\]
With $s=\sqrt u\,g/2$ one has by the double-angle identity for the hyperbolic sine
\[
\frac{Q_{\Ahat}(\sqrt u\,g)^{2m+\ell+1}}
     {Q_{\Ahat}(2\sqrt u\,g)^\ell}
=
Q_{\Ahat}(2s)^{2m+1}\cosh(s)^\ell.
\]
Thus the part of this factor contributing $g^{2q}$ is
\[
4^{-q}u^q
[s^{2q}]Q_{\Ahat}(2s)^{2m+1}\cosh(s)^\ell .
\]
The total power of $g$ is correct exactly when
\[
\ell+(2r+4)+2q=2m+\ell,
\]
i.e. $q=m-r-2$. Therefore the coefficient of
$\eps u^{m-r-2}v^{r+1}$ is
\[
2^\ell4^{-q}
\left(2m+1+\ell(1-4^{r+2})\right)
[s^{2q}]Q_{\Ahat}(2s)^{2m+1}\cosh(s)^\ell .
\]
Since $r+2=m-q$, this is $2^\ell4^{-q}P_{m,q}(\ell)$.
\end{proof}

\subsection{The rank argument and completion of the proof}

\begin{proposition}\label{prop:evaluation-rank}
For every $m\ge 2$, the matrix
\[
\bigl(\Lambda_{m,r}([Y_{m,\ell}])\bigr)_{
\ell\in\{1,3,\ldots,2m-1\},\;0\le r\le m-2}
\]
has rank $m-1$.
\end{proposition}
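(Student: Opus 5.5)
The plan is to reduce the statement to a polynomial-interpolation argument using Proposition~\ref{prop:Lambda-eval}. The matrix in question has $m$ rows, indexed by $\ell\in\{1,3,\ldots,2m-1\}$, and $m-1$ columns, indexed by $r$ with $q=m-r-2$. By Proposition~\ref{prop:Lambda-eval} its entries are $2^\ell 4^{-q}P_{m,q}(\ell)$. Multiplying the row of $\ell$ by $2^{-\ell}$ and the column of $q$ by $4^{q}$ does not change the rank. So it suffices to show that the matrix $\bigl(P_{m,q}(\ell)\bigr)_{\ell,q}$ has rank $m-1$, i.e. that its $m-1$ columns are linearly independent.

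The key step is to show that $P_{m,q}(n)$ is a polynomial in $n$ of exact degree $q+1$.

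\begin{enumerate}[label=(\roman*)]
\item Write $\cosh(s)^n=\exp\bigl(n\log\cosh s\bigr)$ and use $\log\cosh s=s^2/2+O(s^4)$. Expanding the exponential shows that $[s^{2j}]\cosh(s)^n$ is a polynomial in $n$ of degree $j$, with leading coefficient $1/(2^j j!)$. The degree-$j$ term in $n$ can only come from $(n s^2/2)^j/j!$.
\item Next expand
\[
[s^{2q}]\,Q_{\Ahat}(2s)^{2m+1}\cosh(s)^n=\sum_{j=0}^{q}[s^{2q-2j}]Q_{\Ahat}(2s)^{2m+1}\cdot[s^{2j}]\cosh(s)^n .
\]
Since $Q_{\Ahat}(0)=1$, only the term $j=q$ contributes in degree $q$. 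Hence this coefficient has degree exactly $q$ in $n$, with leading coefficient $1/(2^q q!)\neq 0$.
\item The prefactor $2m+1+n(1-4^{m-q})$ has exact degree $1$ in $n$, because $m-q=r+2\ge 2$ gives $1-4^{m-q}\neq0$.
\item Therefore $\deg P_{m,q}=q+1$, and the degrees $1,2,\ldots,m-1$ are pairwise distinct as $q$ runs over $0,\ldots,m-2$.
\end{enumerate}

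Now conclude by interpolation. Suppose $\sum_{q=0}^{m-2}c_q P_{m,q}(\ell)=0$ for all $m$ values $\ell=1,3,\ldots,2m-1$. Then $F(n)=\sum_q c_qP_{m,q}(n)$ is a polynomial of degree at most $m-1$ with $m$ distinct roots, so $F=0$. Because the $P_{m,q}$ have strictly increasing exact degrees, they are linearly independent: inspecting the top-degree term forces the top coefficient $c_q$ to vanish, and descending induction gives $c_q=0$ for all $q$. Hence the $m-1$ columns are independent and the rank is $m-1$.

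The only delicate point is the exact-degree claim, specifically that the leading coefficients do not vanish. This is handled by the exponential/logarithm expansion of $\cosh^n$ together with $m-q\ge2$. Everything else is linear algebra.
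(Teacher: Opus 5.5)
Your proposal is correct and follows essentially the same route as the paper. You rescale rows and columns to reduce to $\bigl(P_{m,q}(\ell)\bigr)$, prove $\deg P_{m,q}=q+1$ via $\cosh(s)^n=\exp(n\log\cosh s)$ with nonzero leading coefficient $(1-4^{m-q})/(2^q q!)$, and conclude by interpolation at the $m$ odd points $\ell=1,3,\ldots,2m-1$. Your convolution bookkeeping in step (ii) simply spells out the paper's remark that only the constant term of $Q_{\Ahat}(2s)^{2m+1}$ contributes to the top $n$-coefficient.
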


\begin{proof}
By Proposition~\ref{prop:Lambda-eval}, after multiplying rows and columns
by nonzero scalars and reordering the columns via $q=m-r-2$, it is enough
to prove the rank statement for
\[
\bigl(P_{m,q}(\ell)\bigr)_{
\ell\in\{1,3,\ldots,2m-1\},\;0\le q\le m-2}.
\]
We claim that, as a polynomial in $n$,
\[
\deg P_{m,q}=q+1.
\]
Indeed, write
\[
\cosh(s)^n=\exp(nB(s)),\qquad
B(s)=\log\cosh(s)=\frac{s^2}{2}+O(s^4).
\]
Then
\[
[s^{2q}]Q_{\Ahat}(2s)^{2m+1}\cosh(s)^n
\]
is a polynomial in \(n\) of degree at most \(q\), and
\[
[n^q][s^{2q}]Q_{\Ahat}(2s)^{2m+1}\cosh(s)^n
=
[s^{2q}]\frac{B(s)^q}{q!}
=
\frac{1}{2^q q!}.
\]
Here, only the constant term \(1\) of \(Q_{\Ahat}(2s)^{2m+1}\) contributes to the \(n^q\)-coefficient.
Therefore
\[
[n^{q+1}]P_{m,q}(n)
=
\frac{1-4^{m-q}}{2^q q!}\ne 0,
\]
since \(m-q\ge 2\).

Now suppose
\[
\sum_{q=0}^{m-2} b_qP_{m,q}(\ell)=0
\qquad
\text{for } \ell=1,\, 3,\, \ldots,\, 2m-1.
\]
The left-hand side is a polynomial in $\ell$ of degree at most $m-1$
with $m$ distinct zeros, hence it is identically zero. Since
$P_{m,0}$, $\ldots$, $P_{m,m-2}$ have strictly increasing degrees
$1$, $\ldots$, $m-1$, all $b_q$ vanish. The columns are therefore linearly independent.
\end{proof}

It follows that the images of
\[
[Y_{m,1}],\, [Y_{m,3}],\, \ldots,\, [Y_{m,2m-1}]
\]
span $(J/J^2)_{4m}$. Since these classes are represented by
Ricci-positive Spin manifolds, Proposition~\ref{prop:algebraic-reduction}
applies in every degree $4m$. Together with $J_0=J_4=0$ and the rational
vanishing of $\Omega_n^{\Spin}\otimes\Q$ in degrees~$n$ not divisible by $4$,
this proves Theorem~\ref{thm:main}.

\end{document}